\newtheorem{thm}{Theorem} [section]
\newtheorem{lem}{Lemma}[section]
\theoremstyle{definition}
\theoremstyle{remark}
\numberwithin{equation}{section}
\begin{document}
\title{Sums of polynomial-type exceptional units modulo $n$}
\author[J.Y. Zhao]{Junyong Zhao}
\address{Mathematical College, Sichuan University, Chengdu 610064, P.R. China}
\address{School of Mathematics and Statistics, Nanyang Institute of Technology,
Nanyang 473004, P.R. China}
\email{zhjy626@163.com (J.Y. Zhao)} 
\author[S.F. Hong]{Shaofang Hong$^{*}$}
\address{Mathematical College, Sichuan University, Chengdu 610064, P.R. China}
\email{sfhong@scu.edu.cn; s-f.hong@tom.com; hongsf02@yahoo.com (S.F. Hong)}
\author[C.X. Zhu]{Chaoxi Zhu}
\address{Mathematical College, Sichuan University, Chengdu 610064, P.R. China}
\address{Science and Technology on Communication Security Laboratory,
Chengdu 610041, P.R. China}
\email{zhuxi0824@126.com (C.X. Zhu)}
\begin{abstract}
Let $f(x)\in\mathbb{Z}[x]$ be a nonconstant polynomial. Let $n, k$
and $c$ be integers such that $n\ge 1$ and $k\ge 2$. An integer $a$
is called an $f$-exunit in the ring $\mathbb{Z}_n$ of residue
classes modulo $n$ if $\gcd(f(a),n)=1$. In this paper, we use the
principle of cross-classification to derive an explicit formula
for the number ${\mathcal N}_{k,f,c}(n)$ of solutions $(x_1,...,x_k)$
of the congruence $x_1+...+x_k\equiv c\pmod n$ with all $x_i$ being
$f$-exunits in the ring $\mathbb{Z}_n$. This extends a recent result
of Anand {\it et al.} [On a question of $f$-exunits in
$\mathbb{Z}/{n\mathbb{Z}}$, {\it Arch. Math. (Basel)} {\bf 116} (2021),
403-409]. We derive a more explicit formula for
${\mathcal N}_{k,f,c}(n)$ when $f(x)$ is linear or quadratic.
\end{abstract}
\thanks{$^*$S.F. Hong is the corresponding author and was supported
partially by National Science Foundation of China Grant \#11771304.}
\keywords{Polynomial-type exceptional unit, exponential sum,
ring of residue classes, principle of cross-classification}
\subjclass[]{11B13, 11L03, 11L05}
\maketitle

\section{Introduction}
Let $\mathbb{Z}$ and $\mathbb{Z}^+$ stand for the set of integers
and the set of positive integers, respectively. For any
$n\in\mathbb{Z}^+$, we set $\mathbb{Z}_n\!=\!\{0, 1, ..., n-1\}$
to be the ring of residue classes modulo $n$. Let
$\mathbb{Z}_n^*=\{s\in\mathbb{Z}_n: \gcd(s, n)=1\}$ be the group
of the units in $\mathbb{Z}_n$. Throughout this paper,
let $k$ be an integer with $k\ge 2$. In 1925, Rademacher \cite{[R]}
asked for an explicit formula for the number $N(k,c,n)$ of solutions
$(x_1,...,x_k)\in(\mathbb{Z}_n^*)^{k}$ of the linear congruence
$$
x_1+...+x_k\equiv c\pmod n.
$$
In 1926, Brauer \cite{[B]} solved this problem by showing that
\begin{align} \label{thmR}
N(k,c,n)=\frac{\varphi^k(n)}{n}\Big(\prod_{p|n,p|c}\Big(1-\frac{(-1)^{k-1}}
{(p-1)^{{k-1}}}\Big)\Big)\Big(\prod_{p|n,p\nmid c}\Big(1-\frac{(-1)^k}{(p-1)^{k}}\Big)\Big),
\end{align}
where $\varphi(n)$ is Euler's totient function and the products
are taken over all prime divisors $p$ of $n$. In 2009, Sander \cite{[SJ1]}
gave a new proof of the formula for $N(2,c,n)$ by using
the multiplicativity of $N(2,c,n)$ with respect to $n$.

The concept of exceptional units was first given by Nagell \cite{[NT]}
in 1969; he introduced it to solve certain cubic Diophantine equations.
For any commutative ring $R$ with the identity elment $1_R$, let $R^*$
denote the multiplicative group of units in $R$. An element $a\in R$
is said to be an {\it exceptional unit} if both $a\in R^*$ and
$1_R-a\in R^*$. Many types of Diophantine equations including
Thue equations \cite{[TN1]}, Thue-Mahler equations \cite{[TN2]},
discriminant form equations \cite{[SN2]} and others have been
studied by means of exceptional units (for more references, see
\cite{[Lou]}). On the other hand, with the help of exceptional units,
Lenstra \cite{[LH]} introduced a new method to find Euclidean
number fields. Exceptional units also have connections with cyclic
resultants (\cite{[SC]} and \cite{[SC2]}) and Lehmer's conjecture
related to Mahler measure (\cite{[Si]} and \cite{[Si2]}).

Following Sander's notation in \cite{[SJ]}, we use the coinage
{\it exunit} to stand for exceptional unit. As usual, for any
integer $m$ and prime number $p$, we let $\nu_p(m)$ stand for
the {\it $p$-adic valuation} of $m$, that is, $\nu_p(m)$ is
the unique nonnegative integer $r$ such that $p^r|m$ and
$p^{r+1}\nmid m$. We denote by $\omega(m):=\sum_{p \ {\rm prime}, \ p|m}1$
the number of distinct prime divisors of $m$.
Yang and Zhao \cite{[YZ]} extended Sander's result \cite{[SJ]}
by showing that the number of ways to represent each element of
$\mathbb{Z}_n$ as the sum of $k$ exceptional units is given by
$$
(-1)^{k\omega(n)}\prod_{p| n}p^{k\nu_p(n)-\nu_p(n)-k}
\Big(p\sum_{j=0\atop{j\equiv c\pmod p}}^k\binom{k}{j}+(2-p)^k-2^k\Big).
$$
(This corrects an error in the formula in Theorem 1 of \cite{[YZ]},
where the sign factor $(-1)^k$ should read $(-1)^{k\omega(n)}$.)

We can easily observe that for any $a\in \mathbb{Z}^+$, $a\pmod n$
is an exunit in $\mathbb{Z}_n$ if and only if $\gcd(a(1-a),n)=1$.
In other words, $a\pmod n$ is an exunit in $\mathbb{Z}_n$ if and
only if $\gcd(f(a), n)=1$ with $f(x)=x(1-x)$. This observation
naturally motivates the concept of $f$-exunit as follows. Let
$n\ge 1$ be an integer and let $f(x)\in\mathbb{Z}[x]$.
An integer $a$ is an {\it $f$-exunit} in the ring $\mathbb{Z}_n$
if $\gcd(f(a), n)=1$ (see, for example, \cite{[ACR1]}). We denote
by $E_{f}(n)$ the set of all $f$-exunits in the ring $\mathbb{Z}_n$.
It is clear that
$$
E_{f}(n)=\{a\in\mathbb{Z}_n: \gcd(f(a),n)=1\}.
$$
Throughout, we assume that $f$ is a
nonconstant polynomial and $c$ is an integer.
For any finite set $S$, we denote by $\sharp S$
the number of the elements in $S$. We set
${\mathcal N}_{k,f,c}(n)$ to be the number of
solutions $(x_1,...,x_k)$ of the congruence
$x_1+...+x_k\equiv c\pmod n$ with
$x_1,...,x_k\in E_{f}(n)$, that is,
$$
{\mathcal N}_{k,f,c}(n):=\sharp\{(x_1,...,x_k)\in E_{f}(n)^k
: x_1+...+x_k\equiv c \pmod n\}.
$$
%Clearly, $E_{f}(1)=\{0\}$, and so ${\mathcal N}_{k,f,c}(1)=\sharp\{(0,...,0)\}=1$.
For any given prime number $p$, associated with $f(x)\in\mathbb{Z}[x]$
and $c\in\mathbb{Z}$, we define the nonnegative number
${\mathcal M}_{k,f,c}(p)$ by
\begin{small}
\begin{align}\label{1}
{\mathcal M}_{k,f,c}(p):= \sharp\{ ( x_1 ,..., x_{ k - 1 })\in\mathbb Z_p^{k - 1}:
f(x_1) \cdots f( x_{ k - 1 })f(c-\sum_{i=1}^{k-1}x_i)\equiv  0 \pmod p\}.
\end{align}
\end{small}

\noindent In \cite{[ACR]}, Anand {\it et al.} presented
a formula for ${\mathcal N}_{2,f,c}(n)$ which also extends
Sander's theorem \cite{[SJ]}. However, it still remains open
to give an explicit formula for ${\mathcal N}_{k,f,c}(n)$
when $k\ge 3$.

In this paper, we introduce a new method to investigate
the number ${\mathcal N}_{k,f,c}(n)$. Actually, we make
use of the well-known principle of cross-classification
\cite{[Ap]} to derive an explicit formula of
${\mathcal N}_{k,f,c}(n)$ for all positive integers $n$.
The first main result of this paper can be stated as follows.
\begin{thm}\label{thm1}
Let $f(x)\in\mathbb{Z}[x]$ be a nonconstant polynomial
and let $c$ be an integer. Then ${\mathcal N}_{k,f,c}$
is a multiplicative function, and for any positive
integer $n$, we have
$$
{\mathcal N}_{k,f,c}(n)=n^{k-1}\prod_{p| n}
\Big(1-\frac{{\mathcal M}_{k,f,c}(p)}{p^{k-1}}\Big).
$$
\end{thm}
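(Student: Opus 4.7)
The plan is to apply the principle of cross-classification (Möbius inversion) over divisors of $n$ to remove the coprimality conditions. First, eliminate the last variable by setting
$$F(x_1,\ldots,x_{k-1}) := f\Big(c - \sum_{i=1}^{k-1} x_i\Big) \prod_{i=1}^{k-1} f(x_i),$$
so that $\mathcal{N}_{k,f,c}(n) = \#\{(x_1,\ldots,x_{k-1}) \in \mathbb{Z}_n^{k-1} : \gcd(F(x_1,\ldots,x_{k-1}), n) = 1\}$. Using the identity $\sum_{d\mid\gcd(F,n)} \mu(d) = \mathbbm{1}[\gcd(F,n)=1]$ and swapping the order of summation gives
$$\mathcal{N}_{k,f,c}(n) = \sum_{d \mid n} \mu(d)\, A_d,$$
where $A_d := \#\{\mathbf{x} \in \mathbb{Z}_n^{k-1} : d \mid F(\mathbf{x})\}$, and only squarefree $d$ contribute.

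Next I would compute $A_d$ for squarefree $d \mid n$. Since $F$ is a polynomial, the condition $d \mid F(\mathbf{x})$ depends only on $\mathbf{x} \pmod d$, and each class modulo $d$ lifts to exactly $n/d$ classes modulo $n$. Hence
$$A_d = (n/d)^{k-1} \cdot \#\{\mathbf{y} \in \mathbb{Z}_d^{k-1} : d \mid F(\mathbf{y})\}.$$
By the Chinese remainder theorem applied to the squarefree factorization $d = \prod_{p \mid d} p$, the condition $d \mid F(\mathbf{y})$ decouples into independent conditions $p \mid F(\mathbf{y})$ on the reductions $\mathbf{y} \pmod p$, so the inner count factors as $\prod_{p \mid d} \mathcal{M}_{k,f,c}(p)$.

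Substituting and pulling out $n^{k-1}$ yields
\begin{align*}
\mathcal{N}_{k,f,c}(n) = n^{k-1} \sum_{\substack{d \mid n \\ d \text{ squarefree}}} \mu(d) \prod_{p \mid d} \frac{\mathcal{M}_{k,f,c}(p)}{p^{k-1}} = n^{k-1} \prod_{p \mid n} \Big(1 - \frac{\mathcal{M}_{k,f,c}(p)}{p^{k-1}}\Big),
\end{align*}
where the last equality is the standard factorization of a sum over squarefree divisors. Multiplicativity of $\mathcal{N}_{k,f,c}$ with respect to $n$ is then an immediate consequence of the product form, since for coprime $m,n$ the sets of primes dividing $m$ and $n$ are disjoint.

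The main obstacle I anticipate is making the CRT step rigorous. Specifically, for squarefree $d$, one must verify that the bijection $\mathbb{Z}_d^{k-1} \to \prod_{p \mid d} \mathbb{Z}_p^{k-1}$ carries the solution set $\{F \equiv 0 \pmod d\}$ to the union $\bigcup_{p\mid d} \{F \equiv 0 \pmod p\}$ in the product, whose cardinality factors over $p$. This relies on the polynomial nature of $F$ so that reductions commute with evaluation; once phrased carefully, the remaining computations are formal bookkeeping.
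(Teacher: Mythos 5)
Your proposal is correct and follows essentially the same route as the paper: the M\"obius-function identity you invoke is exactly the principle of cross-classification the authors use, your formula for $A_d$ is their Lemma 2.3, and the CRT factorization over the primes dividing a squarefree $d$ is their Lemma 2.2. One small wording slip in your closing remark: under the CRT bijection the solution set $\{F\equiv 0\pmod d\}$ corresponds to the \emph{product} $\prod_{p\mid d}\{\mathbf{y}: F(\mathbf{y})\equiv 0\pmod p\}$ (the conditions must hold simultaneously), not their union --- this is what makes the count multiply, and your displayed computation already uses the correct version.
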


Theorem \ref{thm1} reduces to the main result of
\cite{[ACR]} (Theorem 2) if $k=2$. For a linear
polynomial $f(x)$, we have the second main result
of this paper generalising (\ref{thmR}).
\begin{thm}\label{thm2}
Let $c$ and $n$ be integers with $n\ge 1$. Let
$f(x)=ax+b\in\mathbb{Z}[x]$ with $\gcd(a,n)=1$.
Then
$${\mathcal N}_{k,f,c}(n)=n^{k-1}\prod_{p|n}
\frac{(p-1)^k+(-1)^k\delta_p}{p^k},$$
where
\begin{align}\label{eq1.2}
\delta_p:=
\begin{cases}
p-1,\ $~${\it if}\ p|(ac+kb),\\
-1,\ ~$~\!$~$~$~{\it if}\ p\nmid(ac+kb).
\end{cases}
\end{align}
\end{thm}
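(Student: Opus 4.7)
The plan is to deduce Theorem \ref{thm2} directly from Theorem \ref{thm1}: comparing the two product formulas, it suffices to establish that for every prime $p\mid n$,
\begin{equation*}
p^{k-1}-\mathcal{M}_{k,f,c}(p)=\frac{(p-1)^{k}+(-1)^{k}\delta_{p}}{p}.
\end{equation*}
Thus the entire problem reduces to evaluating $\mathcal{M}_{k,f,c}(p)$ for the linear polynomial $f(x)=ax+b$.

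I would count the complementary set. Because $\gcd(a,n)=1$ gives $\gcd(a,p)=1$ at every $p\mid n$, the congruence $f(x)\equiv 0\pmod p$ has a \emph{unique} root $y\equiv -a^{-1}b\pmod p$. Consequently
\begin{equation*}
p^{k-1}-\mathcal{M}_{k,f,c}(p)=\#\bigl\{(x_{1},\ldots,x_{k-1})\in\mathbb{Z}_{p}^{k-1}:x_{i}\not\equiv y\ \text{and}\ \textstyle\sum_{i}x_{i}\not\equiv c-y\pmod p\bigr\}.
\end{equation*}
The translation $y_{i}:=x_{i}-y$ converts this into the number of tuples $(y_{1},\ldots,y_{k-1})\in(\mathbb{Z}_{p}^{*})^{k-1}$ with $y_{1}+\cdots+y_{k-1}\not\equiv c'\pmod p$, where $c':=c-ky$ satisfies $ac'\equiv ac+kb\pmod p$. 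In particular $p\mid c'$ is equivalent to $p\mid ac+kb$, so the two branches in the definition of $\delta_{p}$ correspond exactly to whether $c'$ vanishes modulo $p$.

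To finish I would invoke additive-character orthogonality: expand the indicator of $\sum_i y_{i}\equiv c'\pmod p$ as $p^{-1}\sum_{t=0}^{p-1}e^{2\pi it(\sum_i y_{i}-c')/p}$. The inner sum $\sum_{y\in\mathbb{Z}_{p}^{*}}e^{2\pi ity/p}$ equals $p-1$ for $t=0$ and $-1$ otherwise, while $\sum_{t=1}^{p-1}e^{-2\pi itc'/p}$ equals $p-1$ when $p\mid c'$ and $-1$ otherwise. Collecting these inputs through a short calculation unifies both sub-cases into the required formula $((p-1)^{k}+(-1)^{k}\delta_{p})/p$. The main obstacle here is purely combinatorial bookkeeping: one must keep the sign $(-1)^{k-1}$ coming from the character sum, the shift by $y$, and the two cases defining $\delta_{p}$ all consistent with the claimed identity. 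Once this is in hand, substitution into Theorem \ref{thm1} immediately yields Theorem \ref{thm2}.
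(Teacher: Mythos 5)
Your proposal is correct and follows essentially the same route as the paper: reduce via Theorem \ref{thm1} to evaluating $\mathcal{M}_{k,f,c}(p)$, pass to the complement, translate by the unique root of $f$ modulo $p$ so the count becomes the number of unit tuples with prescribed sum $c'\equiv c+ka^{-1}b$, and note $p\mid c'\iff p\mid(ac+kb)$. The only cosmetic difference is that you evaluate the resulting unit-sum count $N(k-1,c',p)$ directly by additive-character orthogonality, whereas the paper quotes Brauer's formula (\ref{thmR}); both give the same two-case value and hence the same $\delta_p$.
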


For a quadratic polynomial $f(x)$, one can deduce from
Theorem 1.1 the third main result of this paper, which
extends the Yang-Zhao theorem \cite{[YZ]}.
\begin{thm}\label{thm3}
Let $c$ and $n$ be integers with $n\ge 1$. Let
$f(x)=(a_1x-a_2)(b_1x-b_2)$ with $a_1,a_2,b_1,b_2\in \mathbb Z$
and $\gcd(a_1,n)=\gcd(b_1,n)=\gcd(a_1b_2-a_2b_1, n)=1$. Then
$$
{\mathcal N}_{k,f,c}(n)=(-1)^{k\omega(n)}\prod_{p| n}
p^{k\nu_p(n)-\nu_p(n)-k}\Big(p\sum_{j=0\atop{(a_2b_1-a_1b_2)j
\equiv a_1b_1c-a_1b_2k\!\!\pmod p}}^k\binom{k}{j}+(2-p)^k-2^k\Big).
$$
\end{thm}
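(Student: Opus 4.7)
The strategy is to apply Theorem~\ref{thm1} and compute $\mathcal{M}_{k,f,c}(p)$ for each prime $p\mid n$. The hypotheses force $a_1,b_1$ to be units mod $p$ and $\alpha:=b_2b_1^{-1}\not\equiv\beta:=a_2a_1^{-1}\pmod p$, so $f$ has exactly two distinct roots $\alpha,\beta$ in $\mathbb Z_p$, and the condition $f(z)\not\equiv 0\pmod p$ simply reads $z\notin\{\alpha,\beta\}$.

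Setting $y_k:=c-\sum_{i<k}x_i$ turns the complement $p^{k-1}-\mathcal{M}_{k,f,c}(p)$ into the number of $(y_1,\dots,y_k)\in(\mathbb Z_p\setminus\{\alpha,\beta\})^k$ satisfying $y_1+\cdots+y_k\equiv c\pmod p$. I would then perform the affine substitution $v_i\equiv(y_i-\alpha)(\beta-\alpha)^{-1}\pmod p$, which is well defined because $\beta-\alpha\not\equiv 0$ and identifies $\{\alpha,\beta\}$ with $\{0,1\}$. This reduces the count to $N_e(k,c',p)$, namely the number of representations of
$$c':=(c-k\alpha)(\beta-\alpha)^{-1}\equiv\frac{a_1b_1c-a_1b_2k}{a_2b_1-a_1b_2}\pmod p$$
as an ordered sum of $k$ exunits in $\mathbb Z_p$. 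A standard additive-character argument—writing the indicator of $\sum v_i\equiv c'\pmod p$ via the characters $v\mapsto e^{2\pi itv/p}$ of $\mathbb Z_p$, evaluating the inner sum as $p-2$ when $t=0$ and as $-(1+e^{2\pi it/p})$ otherwise, and expanding by the binomial theorem to isolate the residues $j\equiv c'\pmod p$—gives the closed form
$$N_e(k,c',p)=\frac{(-1)^k}{p}\Big(p\sum_{\substack{0\le j\le k\\ j\equiv c'\,(\mathrm{mod}\,p)}}\binom{k}{j}+(2-p)^k-2^k\Big).$$

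Translating $j\equiv c'\pmod p$ back through the definition of $c'$ gives precisely $(a_2b_1-a_1b_2)j\equiv a_1b_1c-a_1b_2k\pmod p$. Inserting $\mathcal{M}_{k,f,c}(p)=p^{k-1}-N_e(k,c',p)$ into Theorem~\ref{thm1}, redistributing $n^{k-1}=\prod_{p\mid n}p^{(k-1)\nu_p(n)}$ across the product, and collecting the factor $(-1)^k$ from each prime into $(-1)^{k\omega(n)}$ then yields the stated identity. The principal technical hurdle I anticipate is purely organisational: the labelling of the two roots $\alpha,\beta$ must be chosen so that the final congruence on $j$ matches the exact shape stated in Theorem~\ref{thm3}, since the alternative labelling produces an equivalent but differently presented formula; everything else is either a direct appeal to Theorem~\ref{thm1} or a routine character-sum manipulation.
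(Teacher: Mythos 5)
Your proposal is correct and follows essentially the same route as the paper: apply Theorem~\ref{thm1}, reduce $\mathcal{M}_{k,f,c}(p)$ to counting $k$-tuples in $\mathbb{Z}_p$ avoiding the two roots of $f$ modulo $p$ and summing to $c$, and evaluate that count by an additive character sum. The only cosmetic difference is that you normalise the two roots to $\{0,1\}$ by an affine substitution before the character computation, whereas the paper's Lemma~\ref{lem4} carries the general roots $a,b$ directly through the exponential sum; both yield the same congruence condition on $j$ after clearing the inverses $a_1^{-1},b_1^{-1}$.
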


This paper is organized as follows. In Section 2 we present
several lemmas that are needed in the proofs of Theorems
\ref{thm1} and \ref{thm3}. Section 3 is devoted to the
proof of Theorem 1.1 and Sections 4 and 5 to the proofs
of Theorems \ref{thm2} and \ref{thm3}, respectively.

\section{Preliminary lemmas}

In this section, we supply several lemmas that will be needed
in the proofs of Theorems \ref{thm1} and \ref{thm3}. We begin with the celebrated
principle of cross-classification.

\begin{lem} {\rm (Principle of cross-classification)}
{\rm [3, Theorem 5.31]}
\label{lem1} Let $R$ be any given finite set. For a subset $T$ of
$R$, we denote by $\bar T$ the set of those elements of $R$
which are not in $T$. If $R_1,...,R_{m-1}$
and $R_m$ are arbitrary $m$ given distinct subsets of $R$, then
$$\sharp\bigcap_{i=1}^m\bar R_i=\sharp R+\sum_{t=1}^m(-1)^t
\sum_{1\leq i_1<...<i_t\leq m}\sharp\bigcap_{j=1}^tR_{i_j}.$$
\end{lem}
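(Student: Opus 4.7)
The plan is to prove the identity via an elementary indicator-function calculation, which reduces to a formal expansion followed by a reordering of finite sums. For any subset $T\subseteq R$, let $\mathbf{1}_T\colon R\to\{0,1\}$ denote its characteristic function, so that $\sharp T=\sum_{x\in R}\mathbf{1}_T(x)$. Since $\mathbf{1}_{\bar R_i}(x)=1-\mathbf{1}_{R_i}(x)$ and an element lies in a finite intersection exactly when it lies in each factor, I would first rewrite
$$\sharp\bigcap_{i=1}^{m}\bar R_i=\sum_{x\in R}\prod_{i=1}^{m}\bigl(1-\mathbf{1}_{R_i}(x)\bigr).$$

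Next I would expand the product on the right by full distributivity, separating off the term of degree zero:
$$\prod_{i=1}^{m}\bigl(1-\mathbf{1}_{R_i}(x)\bigr)=1+\sum_{t=1}^{m}(-1)^{t}\sum_{1\le i_1<\cdots<i_t\le m}\mathbf{1}_{R_{i_1}}(x)\cdots\mathbf{1}_{R_{i_t}}(x).$$
Since $\mathbf{1}_{R_{i_1}}(x)\cdots\mathbf{1}_{R_{i_t}}(x)=\mathbf{1}_{R_{i_1}\cap\cdots\cap R_{i_t}}(x)$, summing over $x\in R$ turns the $t=0$ contribution into $\sharp R$ and each inner term into $\sharp\bigcap_{j=1}^{t}R_{i_j}$. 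Interchanging the outer sum over $x$ with the finite sums over $t$ and $(i_1,\dots,i_t)$ is legal (all quantities are nonnegative integers), and yields the claimed formula.

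I do not expect any serious obstacle: the heart of the argument is the purely algebraic identity $\prod_{i=1}^{m}(1-a_i)=\sum_{S\subseteq\{1,\dots,m\}}(-1)^{|S|}\prod_{i\in S}a_i$, applied pointwise to $a_i=\mathbf{1}_{R_i}(x)$, combined with the observation that the sum of an indicator function over $R$ counts the corresponding set. If one preferred, an equivalent proof by induction on $m$ is available, starting from the trivial case $\sharp\bar R_1=\sharp R-\sharp R_1$ and using the decomposition $\bigcap_{i=1}^{m}\bar R_i=\bigcap_{i=1}^{m-1}\bar R_i\setminus\bigl(\bigcap_{i=1}^{m-1}\bar R_i\cap R_m\bigr)$ together with the inductive hypothesis applied both to the family $\{R_1,\dots,R_{m-1}\}$ and to the family $\{R_1\cap R_m,\dots,R_{m-1}\cap R_m\}$; but the indicator-function approach is cleaner and avoids repeated case analysis on subset sizes.
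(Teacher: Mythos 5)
Your indicator-function argument is correct and complete: the identity $\prod_{i=1}^{m}(1-a_i)=\sum_{S\subseteq\{1,\dots,m\}}(-1)^{|S|}\prod_{i\in S}a_i$ applied pointwise to $a_i=\mathbf{1}_{R_i}(x)$, followed by summation over $x\in R$, yields exactly the stated formula, and the hypothesis that the $R_i$ are distinct is not even needed. The paper itself offers no proof of this lemma --- it is quoted verbatim from Apostol [3, Theorem 5.31] --- so there is nothing to compare against; your write-up (or the inductive alternative you sketch) would serve as a self-contained substitute for the citation.
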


The next result can be proved by using the Chinese remainder theorem.

\begin{lem} \label{lem2} {\rm [3, Theorem 5.28]}
Let $r\in \mathbb Z^+$ and $g(x_1,...,x_r)\in\mathbb{Z}
[x_1,...,x_r]$, and let $m_1,..., m_k$ be pairwise
relatively prime positive integers. For any integer
$i$ with $1\le i\le k$, let $N_i$ be the number of
zeros of $g(x_1,...,x_r)\equiv 0\pmod {m_i}$ and let $N$
denote the number of zeros of $g(x_1,...,x_r)\equiv 0
\pmod {\prod_{i=1}^k m_i}$. Then $N=\prod_{i=1}^k N_i$.
\end{lem}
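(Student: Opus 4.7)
The plan is to reduce the claim to the fact that the Chinese remainder theorem gives a ring isomorphism under which polynomial evaluation transports cleanly. First I set $M=\prod_{i=1}^k m_i$. Since $m_1,\ldots,m_k$ are pairwise coprime, the standard CRT yields a ring isomorphism $\varphi: \mathbb{Z}/M\mathbb{Z} \longrightarrow \prod_{i=1}^k \mathbb{Z}/m_i\mathbb{Z}$, and applying $\varphi$ coordinatewise gives a bijection $\Phi: (\mathbb{Z}/M\mathbb{Z})^r \longrightarrow \prod_{i=1}^k (\mathbb{Z}/m_i\mathbb{Z})^r$.

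The key step is to verify that $\Phi$ restricts to a bijection between the zero set of $g$ modulo $M$ and the Cartesian product of the zero sets of $g$ modulo each $m_i$. Because $\varphi$ is a ring homomorphism with $\varphi(1) = (1,\ldots,1)$, it commutes with sums and products, and therefore with evaluation of any polynomial $g\in\mathbb{Z}[x_1,\ldots,x_r]$. Hence if $\Phi(x_1,\ldots,x_r)=\big((x_1^{(i)},\ldots,x_r^{(i)})\big)_{i=1}^k$, then $g(x_1,\ldots,x_r)\equiv 0\pmod M$ iff $\varphi(g(x_1,\ldots,x_r))=(0,\ldots,0)$ iff $g(x_1^{(i)},\ldots,x_r^{(i)})\equiv 0\pmod{m_i}$ for every $i$.

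Taking cardinalities on both sides of this restricted bijection, the left-hand zero set has size $N$ while the right-hand product of zero sets has size $\prod_{i=1}^k N_i$, giving the claimed identity. There is no genuine obstacle: the content lies entirely in CRT together with the functoriality of polynomial evaluation under ring homomorphisms, both of which are standard. If one prefers a fully elementary write-up, the same argument can be phrased by choosing, for each tuple of residues $(x_1^{(i)},\ldots,x_r^{(i)})_{i=1}^k$, the unique $(x_1,\ldots,x_r)\in(\mathbb{Z}/M\mathbb{Z})^r$ with $x_j\equiv x_j^{(i)}\pmod{m_i}$ and then invoking the congruence $g(x_1,\ldots,x_r)\equiv g(x_1^{(i)},\ldots,x_r^{(i)})\pmod{m_i}$, which holds componentwise by reduction of integer coefficients.
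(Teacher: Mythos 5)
Your proof is correct and follows exactly the route the paper intends: the paper gives no proof of its own, merely remarking that the lemma ``can be proved by using the Chinese remainder theorem'' and citing Apostol, and your CRT ring-isomorphism argument (with polynomial evaluation commuting with the induced bijection on $r$-tuples) is the standard way to carry that out.
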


\begin{lem}\label{lem3}
Let $n,r, m\in \mathbb Z^+$ with $m|n$ and let
$g(x_1,...,x_r)\in \mathbb Z[x_1,...,x_r]$. Then
\begin{align*}
&\sharp\{(x_1,...,x_r)\in \mathbb Z_n^r: g(x_1,...,x_r)\equiv 0 \pmod m\}\\
=&\Big(\frac{n}{m}\Big)^r\sharp\{(x_1,...,x_r)\in \mathbb Z_m^r:
g(x_1,...,x_r)\equiv 0 \pmod m\}.
\end{align*}
\end{lem}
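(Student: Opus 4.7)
The plan is to exploit the observation that, since $m \mid n$, reduction modulo $m$ gives a well-defined surjection $\pi\colon \mathbb{Z}_n \to \mathbb{Z}_m$ whose fibers all have the same cardinality $n/m$. Taking the $r$-fold product yields a map $\Pi\colon \mathbb{Z}_n^r \to \mathbb{Z}_m^r$ whose fibers all have cardinality $(n/m)^r$.

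The key point is the compatibility between $\Pi$ and the congruence $g \equiv 0 \pmod m$. First I would note that if $(x_1,\dots,x_r) \in \mathbb{Z}_n^r$ and $(y_1,\dots,y_r) = \Pi(x_1,\dots,x_r) \in \mathbb{Z}_m^r$, then $x_i \equiv y_i \pmod m$ for every $i$, and since $g$ has integer coefficients, this forces
\[
g(x_1,\dots,x_r) \equiv g(y_1,\dots,y_r) \pmod m.
\]
Consequently $(x_1,\dots,x_r)$ lies in the set on the left-hand side of the identity if and only if its image $\Pi(x_1,\dots,x_r)$ lies in the set on the right-hand side. In other words, the set on the left is exactly $\Pi^{-1}$ of the set on the right.

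I would then finish by a fiber-counting argument: since $\Pi$ has constant fiber size $(n/m)^r$, the preimage of any subset $S \subseteq \mathbb{Z}_m^r$ has cardinality $(n/m)^r \cdot \sharp S$. Applying this to the set
\[
S = \{(y_1,\dots,y_r) \in \mathbb{Z}_m^r : g(y_1,\dots,y_r) \equiv 0 \pmod m\}
\]
yields the claimed identity.

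There is no substantive obstacle here; this lemma is really a counting triviality built from the uniformity of fibers of reduction mod $m$. The only mild point to be careful about is verifying that $g$ having integer coefficients is what allows one to evaluate $g$ meaningfully modulo $m$ on representatives modulo $n$; this is why the hypothesis $g(x_1,\dots,x_r) \in \mathbb{Z}[x_1,\dots,x_r]$ matters and is used in the congruence step above.
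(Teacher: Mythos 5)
Your proposal is correct and is essentially the paper's own argument: the paper lifts each solution $(x_1,\dots,x_r)\in\{0,\dots,m-1\}^r$ to the $(n/m)^r$ tuples $(x_1+i_1m,\dots,x_r+i_rm)$, which is exactly your fiber-counting for the reduction map $\Pi\colon\mathbb Z_n^r\to\mathbb Z_m^r$ phrased with explicit coset representatives. No gap; if anything you are slightly more explicit than the paper in noting that the left-hand set equals the full preimage $\Pi^{-1}(S)$ rather than merely containing the lifts.
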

\begin{proof}
Let $(x_1, ..., x_r)$ be any $r$-tuple of integers
with all $x_i$ in the set $\{0,1,...,m-1\}$ such that
$$
g(x_1,...,x_r)\equiv 0 \pmod m.
$$
Then, for arbitrary integers $i_1,...,i_r$ with
$0\leq i_1,...,i_r\leq \frac{n}{m}-1$,
$$0\le x_1+i_1m,...,x_r+i_rm\le n-1$$
and
$$g(x_1+i_1m,...,x_r+i_rm)\equiv 0 \pmod m.$$
Every such $i_j \ (1\le j\le r)$ has $\frac{n}{m}$
choices. So, one can immediately deduce the assertion
in the lemma.
\end{proof}

In the following, let $p$ be a prime number. For any
integer $a$ coprime to $p$, let $a^{-1}$ stand for an
integer satisfying that $aa^{-1}\equiv 1\pmod p$.
For any $a_1, a_2, b_1, b_2\in\mathbb{Z}$
with $\gcd(a_1a_2,p)=1$, let $f_1(x)=(a_1x-b_1)(a_2x-b_2)$
and $f_2(x)=(x-a_1^{-1}b_1)(x-a_2^{-1}b_2)$. Then
$f_1(t)=a_1a_2f_2(t)$ for any integer $t$. It follows that
\begin{align*}
E_{f_1}(p)=\{t\in\mathbb{Z}_p: \gcd(f_1(t),p)=1\}
=\{t\in\mathbb{Z}_p: \gcd(f_2(t),p)=1\}=E_{f_2}(p).
\end{align*}
So, for our purpose, if $f(x)$ is a reducible quadratic
polynomial with no multiple zeros, then we can assume that
$f(x)=(x-a)(x-b)\in \mathbb Z[x]$. Let us now compute
${\mathcal M}_{k,f,c}(p)$.

\begin{lem}\label{lem4}
Let $a,b\in \mathbb Z_p$ with $a\not=b$ and let $f(x)=(x-a)(x-b)$.
Then
\begin{small}
\begin{align*}
&{\mathcal M}_{k,f,c}(p)=p^{k-1}-\frac{(-1)^k}{p}
\Big(p\sum_{j=0\atop{(a-b)j\equiv c-bk\pmod p}}^k\binom{k}{j}+(2-p)^k-2^k\Big).
\end{align*}
\end{small}
\end{lem}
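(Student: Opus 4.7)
The plan is to first convert $\mathcal M_{k,f,c}(p)$ into a complementary count via a bijection, and then evaluate that count by inclusion–exclusion on the two forbidden residues $a$ and $b$.

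First I would observe that $x_k:=c-x_1-\cdots-x_{k-1}\pmod p$ sets up a bijection between $\mathbb Z_p^{k-1}$ and $\{(x_1,\ldots,x_k)\in\mathbb Z_p^k:x_1+\cdots+x_k\equiv c\pmod p\}$. Under this bijection, $\mathcal M_{k,f,c}(p)$ counts tuples on which at least one $f(x_i)\equiv 0\pmod p$, whereas $\mathcal N_{k,f,c}(p)$ counts the complement; hence
\[
\mathcal M_{k,f,c}(p)=p^{k-1}-\mathcal N_{k,f,c}(p),
\]
which reduces the lemma to the evaluation of $\mathcal N_{k,f,c}(p)$.

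Since $f(x)=(x-a)(x-b)$ with $a\not\equiv b\pmod p$, an $f$-exunit modulo $p$ is exactly an element of $\mathbb Z_p\setminus\{a,b\}$. I would then expand the indicator $\prod_{i=1}^k\mathbf 1_{\{x_i\ne a\}}\mathbf 1_{\{x_i\ne b\}}$ and group terms by the pair of disjoint subsets $A,B\subseteq\{1,\ldots,k\}$ recording which coordinates are forced to equal $a$ and $b$ respectively. For fixed disjoint $(A,B)$, the remaining $k-|A|-|B|$ coordinates satisfy a single linear congruence mod $p$, contributing $p^{k-|A|-|B|-1}$ solutions when $|A|+|B|<k$ and contributing $1$ or $0$ when $|A|+|B|=k$ according as $(a-b)|A|\equiv c-kb\pmod p$ or not. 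Grouping by $j=|A|+|B|$ and using the identity $\sum_{s=0}^{j}\binom{k}{s}\binom{k-s}{j-s}=2^{j}\binom{k}{j}$ collapses the inclusion–exclusion to
\[
\mathcal N_{k,f,c}(p)=\sum_{j=0}^{k-1}(-1)^j\binom{k}{j}2^j p^{k-j-1}+(-1)^k\sum_{\substack{s=0\\(a-b)s\equiv c-kb\!\!\pmod p}}^{k}\binom{k}{s}.
\]

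Finally, I would recognize the first sum as the binomial expansion of $(p-2)^k$ stripped of its $j=k$ term and divided by $p$, i.e.\ $\tfrac{1}{p}\bigl((p-2)^k-(-2)^k\bigr)$. Using $(p-2)^k=(-1)^k(2-p)^k$ and $(-2)^k=(-1)^k 2^k$ lets me factor out $(-1)^k/p$ from the combined expression and recover, via the first step, the claimed closed form for $\mathcal M_{k,f,c}(p)$. The only real obstacle is the bookkeeping: correctly reconciling the exceptional $|A|+|B|=k$ layer with the binomial-theorem collapse of the generic layers and keeping track of the signs — no deeper conceptual input is required beyond elementary inclusion–exclusion.
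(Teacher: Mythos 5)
Your argument is correct and reaches the stated formula, but the central computation is carried out by a genuinely different method from the paper's. Both proofs begin identically, reducing the lemma to the complementary count $\sharp\{(x_1,\ldots,x_k)\in(\mathbb Z_p\setminus\{a,b\})^k:\sum_i x_i\equiv c\pmod p\}$. The paper then evaluates this count with additive characters: it writes the congruence condition as $\frac1p\sum_{y=0}^{p-1}\exp(2\pi\mathrm{i}y(x_1+\cdots+x_k-c)/p)$, uses $\sum_{x\in\mathbb Z_p\setminus\{a,b\}}\exp(2\pi\mathrm{i}xy/p)=-\exp(2\pi\mathrm{i}ay/p)-\exp(2\pi\mathrm{i}by/p)$ for $y\not\equiv 0$, expands the $k$-th power by the binomial theorem, and evaluates the resulting geometric sums, which is where the condition $(a-b)j\equiv c-bk\pmod p$ emerges. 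You instead run an inclusion--exclusion over the $2k$ events $\{x_i=a\}$, $\{x_i=b\}$: pairs $(A,B)$ of disjoint index sets with $|A|+|B|=j<k$ contribute $(-1)^j\binom{k}{j}2^jp^{k-j-1}$ via the identity $\sum_s\binom{k}{s}\binom{k-s}{j-s}=2^j\binom{k}{j}$, while the exceptional layer $|A|+|B|=k$ produces exactly the constrained binomial sum $\sum_{(a-b)s\equiv c-kb}\binom{k}{s}$; summing the generic layers as the truncated expansion of $(p-2)^k$ recovers the $(2-p)^k-2^k$ terms. I checked the bookkeeping (the empty intersection when $A_i$ and $B_i$ are both imposed, the $j=k$ boundary case, and the sign manipulations $(p-2)^k=(-1)^k(2-p)^k$, $(-2)^k=(-1)^k2^k$) and it is sound. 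Your route is more elementary, avoiding roots of unity entirely and making it combinatorially transparent why the constrained binomial sum appears; the paper's character-sum route requires less case analysis and scales more mechanically if one excludes more than two residues. Either proof is acceptable.
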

\begin{proof}
Since $f(x)=(x-a)(x-b)\in\mathbb{Z}[x]$,
\begin{align*}
&{\mathcal M}_{k,f,c}(p)=\sharp\{(x_1,...,x_{k-1})\in
\mathbb Z_p^{k-1} : f(x_1)\cdots f(x_{k-1})
f(c-\sum_{i=1}^{k-1}x_i)\equiv 0 \pmod p\}\\
=&\sharp\mathbb Z_p^{k - 1}-\sharp\{(x_1,...,x_{k - 1})
\in\mathbb Z_p^{k-1}: f(x_1)\cdots f(x_{k - 1})f(c -\sum_{i=1}^{k-1}x_i)
\not\equiv  0 \pmod p\}\\
=&p^{k-1}-\sharp\{(x_1, ..., x_{k - 1 })\in \mathbb Z_p^{k-1}:
(c-\sum_{i=1}^{k-1}x_i-a)(c-\sum_{i=1}^{ k - 1 }x_i - b)
\prod_{i=1}^{k - 1}(x_i-a)(x_i - b)\not\equiv 0\!\!\!\!\!\! \pmod p\}\\
=&p^{k-1}-\sharp\{(x_1,...,x_{k})\in \big(\mathbb Z_p\backslash
\{a,b\}\big)^k: \sum_{i=1}^{k}x_i\equiv c\pmod p\}.
\end{align*}
Moreover,
\begin{align*}
&\sharp\{(x_1,...,x_{k})\in \big(\mathbb Z_p\backslash\{a,b\}\big)^k:
\sum_{i=1}^{k}x_i\equiv c\pmod p\}\\
=&\frac{1}{p}\sum _{y=0}^{p-1}\sum _{(x_1,...,x_k)\in(\mathbb Z_p\backslash\{a,b\})^k}
\exp\Big(\frac{2\pi {\rm i}y(x_1+...+x_k-c)}{p}\Big)\\
=&\frac{1}{p}\sum _{y=0}^{p-1}\Big(\sum _{x\in\mathbb Z_p\backslash\{a,b\}}
\exp\Big(\frac{2\pi {\rm i}xy}{p}\Big)\Big)^k\exp\Big(\frac{2\pi {\rm i}(-cy)}{p}\Big)\\
=&\frac{1}{p}\Big(\sum _{y=1}^{p-1}\Big(-\exp\Big(\frac{2\pi {\rm i}ay}{p}\Big)
-\exp\Big(\frac{2\pi {\rm i}b y}{p}\Big)\Big)^k\exp\Big(\frac{2\pi {\rm i}(-cy)}{p}\Big)+(p-2)^k\Big)\\
=&\frac{1}{p}\Big((-1)^k\sum _{y=1}^{p-1}\Big(\sum_{j=0}^{k}\binom{k}{j}\exp\Big(\frac{2\pi {\rm i}y(ja+(k-j)b))} {p}\Big)\exp\Big(\frac{2\pi {\rm i}(-cy)}{p}\Big)+(p-2)^k\Big)\\
=&\frac{1}{p}\Big((-1)^k\sum _{j=0}^{k}\binom{k}{j}\sum_{y=1}^{p-1}
\exp\Big(\frac{2\pi {\rm i}y(aj+bk-bj-c)}{p}\Big)+(p-2)^k\Big)\\%
=&\frac{(-1)^k}{p}\Big(\sum_{j=0\atop{(a-b)j\equiv c-bk\pmod p}}^{k}\binom{k}{j}(p-1)
-\sum_{j=0\atop{(a-b)j\not\equiv c-bk\pmod p}}^{k}\binom{k}{j}+(2-p)^k \Big)\\
=&\frac{(-1)^k}{p}\Big(p\sum_{j=0\atop{(a-b)j\equiv c-bk\pmod p}}^k\binom{k}{j}+(2-p)^k-2^k\Big).
\end{align*}
We then deduce that
\begin{align*}
&{\mathcal M}_{k,f,c}(p)=p^{k-1}-\frac{(-1)^k}{p}
\Big(p\sum_{j=0\atop{(a-b)j\equiv c-bk\pmod p}}^k\binom{k}{j}+(2-p)^k-2^k\Big)
\end{align*}
as desired. Lemma 2.4 is proved.
\end{proof}

\section{Proof of Theorem \ref{thm1}}

In this section, we use Lemmas \ref{lem1}, \ref{lem2} and \ref{lem3}
to show Theorem \ref{thm1}.\\
\\
{\it Proof of Theorem \ref{thm1}.}
First of all,
\begin{align}\label{3.1}
&{\mathcal N}_{k,f,c}(n)=\sharp\{(x_1,...,x_{k})\in E_{f}(n)^k:
x_1+...+x_k\equiv c \pmod n\}\\ \notag
&=\sharp\{(x_1 ,..., x_{k})\in \mathbb Z_n^k:
x_1 + ... + x_k \equiv c \pmod n, \gcd(f(x_1),n)
=...=\gcd(f(x_k), n)=1\}\\ \notag
&=\sharp\{(x_1 ,..., x_{k-1})\in\mathbb Z_n^{k-1}:
\gcd(f( x_1 ), n)=... =\gcd(f(x_{k-1}), n )
=\gcd(f(c-\sum_{i=1}^{k-1}x_i),n)=1\}.
\end{align}

Let $n=p_1^{r_1}\cdots p_s^{r_s}$ be the standard prime
factorisation of $n$. In Lemma \ref{lem1}, let
$R=\mathbb Z_n^{k\!-\!1}$ and, for any integer $i$
with $1\le i\le s$, let
$$
R_i=\{(x_1,...,x_{k-1})\in R: f(x_1)\cdots f(x_{k-1})
f(c-\sum_{i=1}^{k-1}x_i)\equiv0\pmod {p_i}\}.
$$
Then
\begin{align}\label{3.2}
\bar R_i& =\{(x_1,...,x_{k-1})\in R: f(x_1)\cdots f(x_{k-1})
f(c-\sum_{i=1}^{k-1}x_i)\not\equiv0 \pmod {p_i}\}\notag\\
&=\!\{(\!x_1\!,\!...,\!x_{\!k\!-\!1}\!)\in R\!:
\gcd(\!f(\!x_1\!)\!,p_i\!)\!=\!...\!=\!\gcd(\!f(\!x_{k\!-\!1}\!)\!,p_i\!)
\!=\!\gcd(f(c-\sum_{i=1}^{k-1}x_i),p_i)=1\}.
\end{align}
It follows from (\ref{3.1}) and (\ref{3.2}) that
\begin{align}\label{3.3}
{\mathcal N}_{k,f,c}(n)=\sharp\bigcap_{i=1}^{s}\bar R_i,
\end{align}
and, for arbitrary integers $i_1,..., i_t$ with $1\le i_1<...<i_t\le s$,
\begin{align*}
\bigcap_{j=1}^tR_{i_j}
=\{(x_1,...,x_{k-1})\in R: f(x_1)\cdots f(x_{k-1})f(c-\sum_{i=1}^{k-1}x_i)
\equiv0 \pmod {\prod_{j=1}^{t} p_{i_j}}\}.
\end{align*}

On the other hand, by Lemmas \ref{lem2} and \ref{lem3},
\begin{align}\label{3.4}
&\sharp\bigcap_{j=1}^tR_{i_j}=
\sharp\{(x_1,...,x_{k-1})\in R: f(x_1)\cdots f(x_{k-1})f(c-\sum_{i=1}^{k-1}x_i)
\equiv 0\pmod {\prod_{j=1}^{t} p_{i_j}}\}\notag\\
=&\Big(\frac{n}{\prod_{j=1}^{t}p_{i_j}}\Big)^{k-1}
\sharp\{( x_1,..., x_{ k - 1 })\in\mathbb Z_{\prod_{j=1}^tp_{i_j}}^{ k - 1}:\notag\\
& \ \ \ \ \ \ \ \ \ \ \ \ \ \ \ \ \ \ \ \  \ \ \ \ \ \ \ \ \ \ \  \ 
f(x_1) \cdots  f(x_{ k - 1 })f(c-\sum_{i=1}^{k-1}x_i)
\equiv0\pmod {\prod_{j=1}^{t} p_{i_j} } \}\notag\\
=& n^{k - 1} \prod_{j=1}^{t}\frac{1}{p_{i_j}^{k-1}}
 \sharp\{ ( x_1,..., x_{ k - 1 })\in\mathbb Z_{p_{i_j}}^{ k - 1}:
 f(x_1) \cdots f( x_{k - 1})f(c-\sum_{i=1}^{k-1}x_i)\equiv 0
\pmod{p_{i_j}}\}\notag\\
=&n^{k-1}\prod_{j=1}^{t}\frac{{\mathcal M}_{k,f,c}(p_{i_j})}{p_{i_j}^{k-1}}.
\end{align}
It then follows from Lemma \ref{lem1}, (\ref{3.3}) and (\ref{3.4}) that
\begin{align*}
{\mathcal N}_{k,f,c}(n)=\sharp\bigcap_{i=1}^{s}\bar R_i
=&\sharp R+\sum_{t=1}^s(-1)^t\sum_{1\leq i_1<...<i_t\leq s}
\sharp\bigcap_{j=1}^tR_{i_j}\\
=&n^{k-1}+\sum_{t=1}^s(-1)^t\sum_{1\leq i_1<...<i_t\leq s}n^{k-1}\prod_{j=1}^{t}\frac{{\mathcal M}_{k,f,c}(p_{i_j})}{p_{i_j}^{k-1}}\\
=&n^{k-1}\Big(1+\sum_{t=1}^s(-1)^t\sum_{1\leq i_1<...<i_t\leq s}\prod_{j=1}^{t}\frac{{\mathcal M}_{k,f,c}(p_{i_j})}{p_{i_j}^{k-1}}\Big)\\
=&n^{k-1}\prod_{p|n}\Big(1-\frac{{\mathcal M}_{k,f,c}(p)}{{p}^{k-1}}\Big)
\end{align*}
as required. This concludes the proof of Theorem \ref{thm1}. \hfill$\square$

\section{Proof of Theorem \ref{thm2}}
In this section, we present the proof of Theorem \ref{thm2}.\\
\\
{\it Proof of Theorem \ref{thm2}.} Choose a prime $p$ with $p|n$,
so that $\gcd(a,p)=1$. By (\ref{1}),
\begin{align*}
&{\mathcal M}_{k,f,c}(p)
=\sharp\{ ( x_1,...,x_{k-1})\in\mathbb Z_p^{k-1}: (a(c-\sum_{i=1}^{k-1}x_{i})+b)
\prod_{i=1}^{k-1}(ax_i+b) \equiv  0 \pmod p\}\\
=&\sharp\{ ( x_1,...,x_{k-1})\in\mathbb Z_p^{ k - 1}:
(c-\sum_{i=1}^{k-1}(x_{i}+ba^{-1})+kba^{-1})\prod_{i=1}^{k-1}
(x_i+ba^{-1})\equiv 0 \!\!\pmod p\}.
\end{align*}
Letting $y_i=x_i+ba^{-1}$ for $1\leq i\leq k-1$ gives
\begin{align}\label{326eq1}
{\mathcal M}_{k,f,c}(p)
=&\sharp\{ ( y_1,...,y_{k-1})\in\mathbb Z_p^{ k - 1}: y_1 \cdots y_{k-1}
(c+kba^{-1}-\sum_{i=1}^{k-1}y_{i})\equiv 0\pmod p\}\notag\\
=&\sharp\mathbb Z_p^{ k-1}-\sharp\{(y_1,...,y_{k-1})\in\mathbb Z_p^{ k - 1}:
y_1 \cdots y_{k-1}( c+kba^{-1}-\sum_{i=1}^{k-1}y_{i})\not\equiv  0 \pmod p\}\notag\\
=&\sharp\mathbb Z_p^{ k-1}-\sharp(\mathbb Z_p^*)^{k-1}+
\sharp\{(y_1,...,y_{k-1})\in(\mathbb Z_p^*)^{k-1}: c+kba^{-1}-\sum_{i=1}^{k-1}y_{i}
\equiv  0 \pmod p\}\notag\\
=&p^{k-1}-(p-1)^{k-1}+\sharp\{ ( y_1,...,y_{k-1})\in(\mathbb Z_p^*)^{k-1}:
\sum_{i=1}^{ k-1}y_{i} \equiv  c+kba^{-1} \pmod p\}\notag\\
=&p^{k-1}-(p-1)^{k-1}+N(k-1,c+kba^{-1},p).
\end{align}
But (\ref{thmR}) tells us that
\begin{align}\label{326eqN}
N(k-1,c+kba^{-1},p)
=\begin{cases}
\frac{(p-1)^{k-1}+(-1)^{k-1}(p-1)}{p},\ {\rm if}\ p|(c+kba^{-1}),\\
\frac{(p-1)^{k-1}+(-1)^{k}}{p},\ {\rm if}\ p\nmid (c+kba^{-1}).
\end{cases}
\end{align}
Combining (\ref{326eqN}) with (\ref{326eq1}), 
\begin{align}\label{eq42}
{\mathcal M}_{k,f,c}(p)=p^{k-1}-\frac{(p-1)^k+(-1)^k\delta_p}{p},
\end{align}
with $\delta_p$ being given as in (\ref{eq1.2}).
Finally, by Theorem \ref{thm1} and (\ref{eq42}), 
\begin{align}\label{eq41}
{\mathcal N}_{k,f,c}(n)=& n^{k-1}\prod_{p|n}
\Big(1-\frac{{\mathcal M}_{k,f,c}(p)}{p^{k-1}}\Big)=n^{k-1}\prod_{p|n}
\frac{(p-1)^k+(-1)^k\delta_p}{p^k}.\notag
\end{align}
This finishes the proof of Theorem \ref{thm2}. \hfill$\Box$

\section{Proof of Theorem \ref{thm3}}

In this section, we use Theorem \ref{thm1} and Lemma \ref{lem4}
to prove Theorem \ref{thm3}. \\
\\
{\it Proof of Theorem \ref{thm3}}.
Choose a prime $p$ with $p|n$ and let $h(x)=(x-a)(x-b)$
with $a=a_2a_1^{-1}$ and $b=b_2b_1^{-1}$ taken modulo $p$.
The inverses exist and $a\not\equiv b\pmod p$ because
$\gcd(a_1, p)=\gcd(b_1, p)=\gcd(a_1b_2-a_2b_1, p)=1$ and
for $t\in\mathbb{Z}$, $f(t)\equiv 0\pmod p$ if and only
if $h(t)\equiv 0\pmod p$. Applying Lemma 2.4 to $h(x)$ gives
\begin{align*}
{\mathcal M}_{k,f,c}(p)
=&{\mathcal M}_{k,h,c}(p)\\
=&p^{k-1}-\frac{(-1)^k}{p}\Big(p\sum_{j=0\atop{(a-b)j\equiv c-bk
\pmod p}}^k\binom{k}{j}+(2-p)^k-2^k\Big)\\
=&p^{k-1}-\frac{(-1)^k}{p}\Big(p\sum_{j=0\atop{(a_2a_1^{-1}-b_2b_1^{-1})j
\equiv c-b_2b_1^{-1}k \pmod p}}^k\binom{k}{j}+(2-p)^k-2^k\Big).
\end{align*}
Then, applying Theorem \ref{thm1},
\begin{align*}
&{\mathcal N}_{k,f,c}(n)
=n^{k-1}\prod_{p| n}\Big(1-\frac{{\mathcal M}_{k,f,c}(p)}{p^{k-1}}\Big)\\
=&n^{k-1}\prod_{p|n}\frac{(-1)^k}{p^{k}}\Big(p\sum_{j=0
\atop{(a_2a_1^{-1}-b_2b_1^{-1})j\equiv c-b_2b_1^{-1}k\!\!\pmod p}}^k\binom{k}{j}+(2-p)^k-2^k\Big)\\
=&(-1)^{k\omega(n)}\prod_{p|n}p^{k\nu_p(n)-\nu_p(n)-k}\Big(p\sum_{j=0
\atop{(a_2a_1^{-1}-b_2b_1^{-1})j\equiv c-b_2b_1^{-1}k\!\!\pmod p}}^k\binom{k}{j}+(2-p)^k-2^k\Big)\\
=&(-1)^{k\omega(n)}\prod_{p|n}p^{k\nu_p(n)-\nu_p(n)-k}\Big(p\sum_{j=0
\atop{(a_2b_1-a_1b_2)j\equiv a_1b_1c-a_1b_2k\!\!\pmod p}}^k\binom{k}{j}
+(2-p)^k-2^k\Big)
\end{align*}
as expected. This completes the proof of Theorem \ref{thm3}. \hfill$\Box$

\begin{center}
{\sc Acknowledgements}
\end{center}
The authors would like to thank Dr. Yulu Feng and
the anonymous referee for their careful reading of
the paper and very helpful suggestions and comments
that improve the presentation of the paper.

\bibliographystyle{amsplain}

\end{document}